\def\bt{\begin{thm}}
\def\et{\end{thm}}
\def\bl{\begin{lem}}
\def\el{\end{lem}}
\def\bd{\begin{defn}}
\def\ed{\end{defn}}
\def\bc{\begin{cor}}
\def\ec{\end{cor}}
\def\bp{\begin{proof}}
\def\ep{\end{proof}}
\def\br{\begin{rem}}
\def\er{\end{rem}}
\newtheorem{thm}{Theorem}[section]
\newtheorem{prop}[thm]{Proposition}
\newtheorem{lem}[thm]{Lemma}
\newtheorem{defn}[thm]{Definition}
\newtheorem{rem}[thm]{Remark}
\newtheorem{cor}[thm]{Corollary}
\numberwithin{equation}{section}
\newcommand{\bthm}{\begin{thm}}
\newcommand{\ethm}{\end{thm}}
\newcommand{\bstp}{\begin{stp}}
\newcommand{\estp}{\end{stp}}
\newcommand{\blemma}{\begin{lemma}}
\newcommand{\elemma}{\end{lemma}}
\newcommand{\bprop}{\begin{prop}}
\newcommand{\eprop}{\end{prop}}
\newcommand{\bpf}{\begin{pf}}
\newcommand{\epf}{\end{pf}}
\newcommand{\bdefn}{\begin{defn}}
\newcommand{\edefn}{\end{defn}}
\newcommand{\brk}{\begin{rmrk}}
\newcommand{\erk}{\end{rmrk}}
\newcommand{\bcrl}{\begin{crl}}
\newcommand{\ecrl}{\end{crl}}
\title[]{Asymptotic normality of Divisors of random Holomorphic Sections on non-compact Complex manifolds}
\address{}
\email{afrimbojnik@sabanciuniv.edu}
\email{ozangunyuz@alumni.sabanciuniv.edu}
\date{}
\keywords{random holomorphic sections, central limit theorem, non-compact Hermitian manifolds.}
\subjclass[2020]{Primary: 32A60, 60F05, 32A25; Secondary: 53C55}
\begin{document}

\author{Afrim Bojnik}
\author{Ozan Günyüz}
\address{Faculty of Engineering and Natural Sciences, Sabanc{\i} University, \.{I}stanbul, Turkey}

\begin{abstract}
We prove a central limit theorem for smooth linear statistics related to the zero divisors of Gaussian i.i.d. centered holomorphic sections of tensor powers of a Hermitian holomorphic line bundle over a non-compact Hermitian manifold.
  \end{abstract}
\maketitle


\section{Introduction and Background}\label{S2}

The statistical aspects connected with zeros of random polynomials and other analytic functions, under different assumptions related to random coefficients, have been the subject of extensive research in mathematics and physics. Among the very important objects were particularly polynomials. Many results regarding both Gaussian and non-Gaussian cases and historical advancements of this polynomial theory may be found, for example, in \cite{BL15, BloomS, BL05, BloomD, ROJ, SHSM, HN08} (and references therein). For instance, in the reference \cite{BL15}, the authors study the complex random variables that are bounded by distribution functions over the entire complex plane $\mathbb{C}$. When we move beyond a large disk of radius $r$, the integral with respect to the two-dimensional Lebesgue measure has an upper bound, which is a function of $r$. This situation is commonly referred to as the tail-end estimate. Before these more recent developments, initial explorations into the distribution of roots of random algebraic equations in a single real variable were pioneered by the likes of Polya and Bloch, Littlewood-Offord, Kac, Hammersley, and Erd\"{o}s-Turan. For readers who are interested in further study, these early efforts can be found in the articles \cite{BlP, Kac43, LO43, HAM56, ET50}.

Another intriguing aspect to consider is the growing body of physics research focused on the equidistribution and probabilistic issues concerning the zeros of complex random polynomials. For initial investigations in this field, sources such as [FH, Hann, NV98] can be referred to.

In their groundbreaking work \cite{SZ99}, which served as a pioneering and initial contribution to the field, Shiffman and Zelditch achieved to prove that the zeros of random sections of the powers $L^{\otimes n}$ of a positive line bundle $L$ over a compact K\"{a}hler  manifold, tend towards a uniform distribution as $n$ goes off to infinity with regard to a natural measure that comes from $L$.

In the broadest context seen so far, various randomness problems emanating from the holomorphic sections of line bundles have been examined in different probabilistic settings (including both Gaussian and non-Gaussian types) as reported in \cite{BCM, BG, CM1, CMM, DLM1, DLM2, Bay16, Shif, SZ99, SZ08, SZ10}.

Our main focus in this article will be the central limit theorem in a general complex geometric setting.

\subsection{Theorem of Sodin and Tsirelson}
Given a sequence $\{\gamma_{j}\}_{j=1}^{\infty}$ of complex-valued measurable functions on a measure space ($G, \sigma$) such that \begin{equation}\sum_{j=1}^{\infty}{|\gamma_j(x)|^{2}}=1\,\,\text{for}\,\,\text{any}\,\,x\in G, \end{equation} following \cite{ST} (and also \cite{SZ10}), a \textit{normalized complex Gaussian process} is defined to be a complex-valued random function $\alpha(x)$ on a measure space $(G, \sigma)$ in the following form \begin{equation}\label{gpro}
                                                      \alpha(x)=\sum_{j=1}^{\infty}{b_{j} \gamma_j(x)},
                                                    \end{equation}where the coefficients $b_{j}$ are i.i.d. centered complex Gaussian random variables with variance one.
The covariance function of $\alpha(x)$ is defined by \begin{equation}\label{cov}
                                                       \beta(x, y)= \mathbb{E}[\alpha(x) \overline{\alpha(y)}]= \sum_{j=1}^{\infty}{\gamma_{j}(x) \overline{\gamma_{j}(y)}}. \end{equation}

A simple observation gives that $|\beta(x, y)| \leq 1$ and $\beta(x, x)=1$.

Let us take a sequence $\{\alpha_{j}\}_{j=1}^{\infty}$ of normalized complex Gaussian processes on a finite measure space $(G, \sigma)$. Let $\lambda(\rho)\in L^2(\mathbb{R}^{+}, e^{\frac{-\rho^2}{2}}\rho d\rho)$. Suppose $\phi: G \rightarrow \mathbb{R}$ is a bounded and measurable function. We consider the following  non-linear functionals (which are also random variables) \begin{equation} \label{nonlin} Z^{\phi}_{n}(\alpha_{n})=\int_{G}{\lambda(|\alpha_{n}(x)|)\phi(x)d\sigma(x)}. \end{equation}

The next theorem (Theorem 2.2 of \cite{ST}) was proved by Sodin and Tsirelson.

\begin{thm}\label{sots}
For each $n=1, 2, \ldots$, let $\beta_{n}(r, s)$ be the covariance functions for the complex Gaussian processes. Assume that the two conditions below hold for all $\nu \in \mathbb{N}$:

\begin{itemize}
 \item[(i)] $$\liminf_{n\rightarrow \infty}{\frac{\int_{G}\int_{G}{|\beta_{n}(r,s)|^{2\nu}\phi(r)\phi(s)d\sigma(r)d\sigma(s)}}{\sup_{r\in G} \int_{G}{|\beta_{n}(r, s)|d\sigma(s)}}}>0.$$
  \item[(ii)] $$\lim_{n\rightarrow \infty}{\sup_{r\in G} \int_{G}{|\beta_{n}(r, s)|d\sigma(s)}}=0.$$
\end{itemize}

Then the distributions of the random variables \begin{equation*}
                                                 \frac{Z^{\phi}_{n}(\alpha_{n})- \mathbb{E}[Z^{\phi}_{n}(\alpha_{n})]}{\sqrt{\mathrm{Var}[Z^{\phi}_{n}(\alpha_{n})]}}
                                               \end{equation*} converge weakly to the normal distribution $\mathcal{N}(0, 1)$ as $n \rightarrow \infty$. If $\lambda$ is increasing, then it is sufficient for $(i)$ to hold only for $\nu=1$.
\end{thm}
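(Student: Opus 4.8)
The strategy is the method of cumulants applied after a Wiener--It\^o chaos expansion that diagonalizes the covariance kernels, in the spirit of the Breuer--Major central limit theorem. Since $\beta_n(x,x)=1$ for every $x$ and $n$, each $\alpha_n(x)$ is a standard complex Gaussian, so $a_0:=\mathbb{E}[\lambda(|\alpha_n(x)|)]$ is independent of $x$ and $n$; put $\tilde\lambda:=\lambda-a_0$ and $\tilde Z_n:=Z^{\phi}_n(\alpha_n)-\mathbb{E}[Z^{\phi}_n(\alpha_n)]=\int_G\tilde\lambda(|\alpha_n(x)|)\phi(x)\,d\sigma(x)$. Let $\{\psi_q\}_{q\ge0}$ be the orthonormal polynomial system attached to the law of $|\alpha_n(x)|$ in $L^2(\mathbb{R}^+,e^{-\rho^2/2}\rho\,d\rho)$, with $\psi_0\equiv1$, normalized so that for jointly Gaussian $\alpha_n(r),\alpha_n(s)$,
\[
\mathbb{E}\bigl[\psi_q(|\alpha_n(r)|)\,\overline{\psi_{q'}(|\alpha_n(s)|)}\bigr]=\delta_{qq'}\,|\beta_n(r,s)|^{2q}
\]
(the complex/Laguerre analogue of Mehler's identity; this is where the exponents $2\nu$ in the hypotheses originate). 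Expanding $\tilde\lambda=\sum_{q\ge1}c_q\psi_q$ and setting $W^{(n)}_q:=\int_G\psi_q(|\alpha_n(x)|)\phi(x)\,d\sigma(x)$, an element of the $2q$-th Wiener chaos generated by $\{b_j\}$, we get $\tilde Z_n=\sum_{q\ge1}c_qW^{(n)}_q$ and
\[
v_n:=\mathrm{Var}[\tilde Z_n]=\sum_{q\ge1}|c_q|^2\,I^{(n)}_q,\qquad I^{(n)}_q:=\int_G\!\int_G|\beta_n(r,s)|^{2q}\,\phi(r)\phi(s)\,d\sigma(r)\,d\sigma(s)\ \ge\ 0,
\]
the nonnegativity because $|\beta_n|^2=\beta_n\overline{\beta_n}$ is a positive-definite kernel (a Schur product of the covariance $\beta_n$ and its conjugate), hence so is its $q$-th Schur power.

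Next I would fix the order of the variance. Let $\rho_n:=\sup_{r\in G}\int_G|\beta_n(r,s)|\,d\sigma(s)$. Since $|\beta_n|\le1$ and $q\ge1$, $I^{(n)}_q\le\|\phi\|_\infty^2\int_G\!\int_G|\beta_n(r,s)|\,d\sigma(r)\,d\sigma(s)\le\|\phi\|_\infty^2\,\sigma(G)\,\rho_n$, so $v_n\le\|\tilde\lambda\|^2\|\phi\|_\infty^2\sigma(G)\,\rho_n$. For the matching lower bound, pick any $\nu_0$ with $c_{\nu_0}\neq0$ (one exists since $\lambda$ is non-constant); hypothesis (i) at $\nu=\nu_0$ gives $I^{(n)}_{\nu_0}\ge c\,\rho_n$ for all large $n$, hence $v_n\ge|c_{\nu_0}|^2c\,\rho_n$. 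Therefore $v_n\asymp\rho_n$, and $v_n\to0$ by (ii). If $\lambda$ is increasing (and non-constant), then $|c_1|$ equals a positive multiple of $\mathrm{Cov}\bigl(\lambda(|\alpha_n(x)|),|\alpha_n(x)|^2\bigr)>0$, since $\lambda(|\alpha_n(x)|)$ and $|\alpha_n(x)|^2$ are non-constant increasing functions of the single random variable $|\alpha_n(x)|$; one may then take $\nu_0=1$, which is why (i) is in that case needed only at $\nu=1$.

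The core of the argument is bounding the higher cumulants, and here I would first truncate. Fix $Q\in\mathbb{N}$ and set $\tilde Z^{\le Q}_n:=\sum_{q\le Q}c_qW^{(n)}_q$, a \emph{finite} sum of chaoses. For such a random variable the diagram formula for joint cumulants of Wiener-chaos elements expresses $\mathrm{Cum}_k(\tilde Z^{\le Q}_n)$ as a finite sum, over connected diagrams on $k$ vertices (vertex $i$ carrying the $\le2Q$ legs of its chaos factor), of $k$-fold integrals over $G^k$ whose integrand is a product of one factor $\beta_n(x_i,x_j)$ or $\overline{\beta_n(x_i,x_j)}$ per edge, times $\prod_{i=1}^k\phi(x_i)$. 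A connected diagram on $k$ vertices has at least $k-1$ edges; retaining only a spanning tree (all other factors having modulus $\le1$) and integrating leaf by leaf via $\int_G|\beta_n(x,y)|\,d\sigma(y)\le\rho_n$ yields
\[
|\mathrm{Cum}_k(\tilde Z^{\le Q}_n)|\ \le\ C_{k,Q}\,\rho_n^{\,k-1}\qquad(k\ge1),
\]
with $C_{k,Q}<\infty$ depending on $k,Q,\|\phi\|_\infty,\sigma(G)$ and $\sum_{q\le Q}|c_q|$. The main obstacle I anticipate is precisely the passage from $\tilde Z^{\le Q}_n$ back to $\tilde Z_n$: the number of diagrams grows factorially in the chaos orders while only $\sum_q|c_q|^2<\infty$ is available, so a termwise bound for the untruncated series diverges; truncating and exploiting the orthogonality of distinct chaoses is what circumvents this.

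Finally I would assemble the pieces. Orthogonality of chaoses gives $v_n=v^{\le Q}_n+v^{>Q}_n$ with $v^{>Q}_n=\sum_{q>Q}|c_q|^2I^{(n)}_q\le\varepsilon_Q\|\phi\|_\infty^2\sigma(G)\rho_n$, $\varepsilon_Q:=\sum_{q>Q}|c_q|^2\to0$; with $v_n\ge|c_{\nu_0}|^2c\,\rho_n$ this gives $v^{>Q}_n\le\kappa\varepsilon_Q v_n$ and $v^{\le Q}_n=(1-O(\varepsilon_Q))v_n\asymp\rho_n$ for $Q$ large, uniformly in $n$. For such a fixed $Q$, $\tilde Z^{\le Q}_n/\sqrt{v^{\le Q}_n}$ has first cumulant $0$, second cumulant $1$, and for $k\ge3$, $|\mathrm{Cum}_k|\le C_{k,Q}\rho_n^{\,k-1}/(v^{\le Q}_n)^{k/2}=O(\rho_n^{(k-2)/2})\to0$; since $\tilde Z^{\le Q}_n$ has all moments finite (hypercontractivity on a fixed chaos) and $\mathcal{N}(0,1)$ is moment-determinate, the Fr\'echet--Shohat theorem gives $\tilde Z^{\le Q}_n/\sqrt{v^{\le Q}_n}\Rightarrow\mathcal{N}(0,1)$ as $n\to\infty$. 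On the other hand, by orthogonality, $\bigl\|\tilde Z_n/\sqrt{v_n}-\tilde Z^{\le Q}_n/\sqrt{v^{\le Q}_n}\bigr\|_{L^2}\le\sqrt{v^{>Q}_n/v_n}+\bigl|1-\sqrt{v^{\le Q}_n/v_n}\bigr|=O(\sqrt{\varepsilon_Q})$, uniformly in $n$. Comparing characteristic functions and letting $n\to\infty$ and then $Q\to\infty$ (a standard $3\varepsilon$ argument) yields $\tilde Z_n/\sqrt{v_n}\Rightarrow\mathcal{N}(0,1)$, which is the assertion.
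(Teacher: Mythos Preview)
The paper does not actually prove this theorem: it is quoted as Theorem~2.2 of \cite{ST}, and the only ``proof'' given in the paper is the one-sentence remark that Sodin and Tsirelson's argument ``relies on the well-known method of moments from probability theory combined with the so-called diagram technique in order to evaluate the moments of the non-linear functionals and compare them with the moments of standard Gaussian distribution.'' Your proposal is precisely a fleshed-out version of that description---Wiener--It\^o/Laguerre chaos expansion to diagonalize the covariance (the identity $\mathbb{E}[\psi_q(|\alpha_n(r)|)\psi_{q'}(|\alpha_n(s)|)]=\delta_{qq'}|\beta_n(r,s)|^{2q}$ is exactly what produces the exponents $2\nu$ in hypothesis~(i)), diagram/spanning-tree bounds on the cumulants of the truncated sum giving $|\mathrm{Cum}_k|\lesssim\rho_n^{k-1}$, and a tail-in-$L^2$ plus $3\varepsilon$ argument to remove the truncation---and it is essentially correct and faithful to the Sodin--Tsirelson approach that the paper invokes.
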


Their proof relies on the well-known method of moments from probability theory combined with the so-called diagram technique in order to evaluate the moments of the non-linear functionals and compare them with the moments of standard Gaussian distribution. This is a classical approach for proving the central limit theorem for nonlinear functionals of Gaussian fields.

Based on our present knowledge, there are two important generalizations of Theorem \ref{sots} for different frameworks. One of them is due to Zelditch and Shiffman in the prequantum line bundle setting, that is, for random holomorphic sections of an Hermitian line bundle over a compact K\"{a}hler manifold where the first Chern form is equal to the K\"{a}hler form multiplied by the imaginary unit and a suitable normalization constant. Another one was obtained in \cite{Bay16}. In his paper, Bayraktar made an extension of Theorem \ref{sots} to the general random polynomials on $\mathbb{C}^m$ within the weighted pluripotential theory.  In both cases, the off-diagonal and near-diagonal asymptotics of the normalized Bergman kernel which will be discussed below (regarded as covariance functions of normalized complex Gaussian processes defined above) play a crucial role. In a different direction, Nazarov and Sodin (\cite{NS}) investigated the asymptotic normality of linear statistics of zeros of Gaussian entire functions on $\mathbb{C}$ in a more general way by considering the measurable bounded test functions and the clustering property of $k$-point correlation functions. The authors of the current paper have recently proved a central limit theorem for smooth linear statistics associated with the zero divisors of random holomorphic sections in a sequence of positive line bundles over a compact Kähler manifold, see \cite{BG}.

The objective of the current paper is to obtain Theorem \ref{sots} for the centered Gaussian  i.i.d. holomorphic sections of a positive Hermitian line bundle on a non-compact Hermitian manifold, which is more general than the results mentioned in the previous paragraph. For this purpose, we initially present our geometric and probabilistic setup, which largely follows \cite{DLM1}, \cite{DLM2} and \cite{DMS}. As in the aforementioned articles, the asymptotics of the normalized Bergman kernel will be one of the key apparatuses. As far as we know, \cite{DMS} is the first paper that focuses on the non-compact (Hermitian) manifolds and in that paper, the authors mostly study the equidistribution problem concerned with holomorphic sections of Hermitian holomorphic line bundles over these non-compact manifolds.

\subsection{Complex differential geometric setting}We consider a complex connected Hermitian manifold of dimension $m$ denoted by $(X, J, \Theta)$. Here, $J$ represents the complex structure, which is a linear transformation on each tangent space that acts as a multiplication by $\sqrt{-1}$. $\Theta$ is the Hermitian form which is a $(1, 1)$-form associated with the induced Riemannian metric $g^{TX}$ on the Hermitian manifold $X$, which is compatible with the complex structure $J$, that is to say, $$\Theta(u, u')=g^{TX}(Ju, u'), \,\,\,\,g^{TX}(Ju, Ju')=g^{TX}(u, u')\,\,\,\text{for}\,\,\text{all}\,\,u,\, u'\in T_{x}X.$$ Let $(L, h^{L})$ be a Hermitian holomorphic line bundle over $X$, where $h^{L}$ is a smooth metric on $L$. $R^{L}$ will denote the Chern curvature form of $L$ and the first Chern form of $L$ is given by $$c_{1}(L, h^{L})=\frac{\sqrt{-1}}{2\pi} R^{L}.$$ Instead of $h^{L}$, we will use shortly just $h$. We make the assumption that the Riemannian metric $g^{TX}$ is complete. We will write the $n^{th}$ tensor power of $L$ as $L^{n}$. On $L^{n}$, the metric $h$ induces a product metric, which we write as $h^{\otimes n}:= h^n$. $H^{0}(X, L)$ is the vector space of all holomorphic sections of $L$ on $X$.

The Hermitian form $\Theta$ determines a natural volume form $dv_{X}$ of the Riemannian metric $g^{TX}$ defined by $dv_{X}=\frac{1}{m!}\Theta^{m}$. The space of compactly supported  smooth sections of $L$ is denoted by $\mathcal{C}^{\infty}(X, L)$.  We define the following $L^2$-type inner product using the metrics $g^{TX}$ and $h^{L}$, for any $s, s' \in \mathcal{C}^{\infty}(X, L)$, \begin{equation}\label{ipn}
                                                                                                       \langle s, s' \rangle_{L^2(X, L)}=\int_{X}{\langle s(x), s'(x) \rangle_{h^{L}}dv_{X}(x)}
                                                                                                      \end{equation}
The completion of $\mathcal{C}^{\infty}(X, L)$ with respect to the inner product (\ref{ipn}) will be denoted by $L^{2}(X, L)$ consisting of square integrable smooth sections of $L$. We take into consideration the space of square integrable holomorphic sections of $L$, namely,  $H^{0}_{(2)}(X, L)=H^{0}(X, L) \cap L^{2}(X, L)$. $H^{0}_{(2)}(X, L)$ is a closed subspace of $L^{2}(X, L)$. Indeed, from Corollary 2.4 of \cite{La}, we have for any compact set $U\subset X$ there exists a constant $B_{U}>0$ such that \begin{equation}\label{CE} \sup_{x\in U}{|s(x)|_{h}} \leq B_{U}\|s\|_{L^{2}(X, L)}\end{equation} for all $s\in H^{0}_{(2)}(X, L)$, which implies also that $H^{0}_{(2)}(X, L)$ is closed. As a closed subspace of a separable Hilbert space, it is a separable Hilbert space as well.

Let $K_{n}: L^{2}(X, L^{n})\rightarrow H^{0}_{(2)}(X, L^{n})$ be the orthogonal projection onto its closed and separable subspace $H^{0}_{(2)}(X, L^{n}).$ The reproducing kernel of this projection exists due to the relation (\ref{CE}) above. It is called the Bergman kernel, denoted by $K_{n}(x, y)$, for details see, e.g., \cite{MM1}, section $1.4$. If $d_{n}=\dim{H^{0}_{(2)}(X, L^{n}})$ ($1\leq d_{n} \leq \infty$) and $\{S^{n}_{j} \}_{j=1}^{d_{n}}$ is an orthonormal basis for $H^{0}(X, L^{n}),$ then by using the orthonormal representation of $K_{n}(x, y)$ with respect to the basis $\{S^{n}_{j} \}_{j=1}^{d_{n}}$ and the reproducing kernel property for the space $H^{0}_{(2)}(X, L^{n})$, we get $$K_{n}(x, y)=\sum_{j=1}^{d_{n}}{S^{n}_{j}(x)\otimes S^{n}_{j}(y)^{*}}\in L^{n}_{x} \otimes (L^{n}_{y})^{*},$$where $S^{n}_{j}(y)^{*}=\langle \, .\,, S^{n}_{j}(y)\rangle_{h^{n}}\in (L^{n}_{y})^{*}$. This series is locally uniformly convergent with its derivatives of all order. We write \begin{equation}\label{bergf} K_{n}(x, x)=\sum_{j=1}^{d_{n}}{|S^{n}_{j}(x)|_{h}^{2}},\end{equation} called the Bergman kernel function. We see that $d_{n}=\int_{X}{K_{n}(x, x)dv_{X}(x)}$(which can possibly be infinite). Thus, the Bergman kernel function has the dimensional density for the space $H^{0}_{(2)}(X, L^{n})$.

We also assume that there are positive constants $B, C$ and $\epsilon$ such that the following three conditions on $(X, J, \Theta)$ and $(L, h^{L})$, \begin{itemize}
                                                                                 \item[(a)] $\sqrt{-1} R^{L} \geq \epsilon \Theta,$
                                                                                 \item[(b)] $\sqrt{-1} R^{\det} \geq -C \Theta,$
                                                                                 \item[(c)] $|\partial \Theta|_{g^{TX}} \leq B$,
                                                                               \end{itemize}where $R^{\text{det}}$ is the curvature of the holomorphic connection $\nabla^{\text{det}}$ on the dual canonical bundle (or anticanonical bundle)$K^{*}_{X}=\det{(T^{(1, 0)}(X))}$.

These three conditions together imply the spectral gap property of Kodaira-Laplace operator, which gives the asymptotic expansion of Bergman kernel on non-compact setting, for a thorough examination, see Chapter 6 of \cite{MM1}(also \cite{DMS}). Also note, in particular here, that if $X$ is a K\"{a}hler manifold, then the condition (c) is satisfied automatically since $\partial \Theta=0$.

\subsection{Random holomorphic sections and divisors} Let $s\in H^{0}(X, L)\backslash \{0\}$. We denote by $Z_s$ the set of zeros of $s$. $Z_s$ is a purely $1$-codimensional analytic variety of $X$. The \textit{divisor} set $\text{Div}(s)$ is defined by \begin{equation}\label{div}
\text{Div}(s)= \sum_{U \subset Z_s}{\text{ord}^{s}_{U}U}.
\end{equation}
Here $U$ ranges over all irreducible analytic hypersurfaces within $Z_{s}$. $\text{ord}^{s}_{U}\in \mathbb{N} \backslash \{0\}$ is the vanishing order of $s$ on $U$. Let $U\subset X$ be any analytic hypersurface, by the symbol $[U]$, we mean the current of integration on $U$, defined by $$\varphi \mapsto \int_{U}{\varphi},$$here $\varphi\in \mathcal{D}_{0}^{m-1, m\emph{}-1}(X)$, which represents the space of test forms of type $(m-1, m-1)$. In this paper, we only use real-valued test forms.

The current of integration on $[\text{Div}(s)]$ is defined as follows: \begin{equation}\label{coi}
                                                                         [\text{Div}(s)]= \sum_{U\subset Z_{s}}{\text{ord}^s_{U}} [U].
                                                                       \end{equation}

A complex random variable $W$ is said to be standard Gaussian in case $W= X + \sqrt{-1} Y$, where $X$ and $Y$ are  real centered i.i.d. Gaussian random variables with variance $1/2$.  Let now $\xi=\{\xi_{j}\}_{j=1}^{d_{n}}$ be a sequence of i.i.d. centered  real or complex Gaussian random variables whose probability distribution law is $\mathbb{P}$. Given an orthonormal basis $S=\{S_{j}\}_{j=1}^{d_{n}}$ of $H^0_{(2)}(X, L)$,  we define a random holomorphic section of $L$ by \begin{equation}\label{rhs} \psi^{S}_{\xi}=\sum_{j=1}^{d_{n}}{\xi_{j}S_{j}}.\end{equation}For any test form $\varphi \in \mathcal{D}_{0}^{m-1, m\emph{}-1}(X)$, the random variable $\langle [\text{Div}(\psi^{S}_{\xi}))], \varphi \rangle$ is called \textit{smooth linear statistic}. If this random variable is integrable for any test form $\varphi$, then the map, called the expectation of $[\text{Div}(\psi^{S}_{\xi}))]$,

 \begin{equation}\label{Expe}
                                                        \varphi \mapsto \mathbb{E}[\langle [\text{Div}(\psi^{S}_{\xi})], \varphi  \rangle]
                                                      \end{equation} defines a $(1, 1)$-current on $X$. The variance is defined as \begin{align}\label{expvari}
  \mathrm{Var} \langle [\text{Div}(\psi^{S}_{\xi})], \, \varphi \rangle = \mathbb{E}\langle [\text{Div}(\psi^{S}_{\xi})], \, \varphi \rangle^{2}- (\mathbb{E}\langle [\text{Div}(\psi^{S}_{\xi})], \, \varphi \rangle)^{2}.\end{align}

Sections as in (\ref{rhs}), with probability one, are holomorphic and the distribution of these standard Gaussian random holomorphic sections do not depend on the orthonormal basis chosen, we refer the reader to \cite{DLM2}(Proposition 2.1 and Proposition 2.3 there) for more comprehensive information.

The Poincar\'{e}-Lelong formula is one indispensable instrument that will be of use in the proof of our main theorem. Let us state it for a divisor of a holomorphic section $s\in H^{0}_{(2)}(X, L^{n})$. \begin{equation}\label{poinle}
  [\text{Div}(s)]= dd^c \log{|s|^{2}_{h^{n}}} + nc_{1}(L, h^{L}).
\end{equation}Here and throughout $dd^c= \frac{\sqrt{-1}}{2\pi} \partial \overline{\partial}$. $ [\text{Div}(s)]$ is a positive closed $(1, 1)$-current. $\mathbb{E}[[\text{Div}(\psi^{S}_{\xi})]]$ is also a positive $(1, 1)$-current by Theorem 1.1 in \cite{DLM2}.

In \cite{DLM2}, the authors show that, almost surely under the standard Gaussian probability measure, the random holomorphic sections described in (\ref{rhs}) are not $L^{2}$-integrable. Furthermore, in the same study, through geometric quantization, by associating these sections with the eigenvalues of a Berezin-Toeplitz operator defined by a sufficiently regular symbol function, they consider weighted versions of (\ref{rhs}), which are $L^{2}$-integrable and form a separable Hilbert subspace of $H^{0}_{(2)}(X, L^{n})$. In their very recent paper \cite{DLM3}, they derive a central limit theorem for zero divisors of these random $L^{2}$-integrable holomorphic sections on the support of this symbol function. Additional details can be found in Section 6 of \cite{DLM3}.

\section{Main Result}

This section contains the formulation and proof of our primary theorem. The important tool that we will employ, as described in Section 1, will be the near-diagonal and off-diagonal Bergman kernel asymptotics in the current setting, which is a theorem proven in \cite{DLM1}. To develop our approach, we will make use of three essential geometric assumptions (a), (b), and (c) given in Subsection 1.1.

Initially, the necessary transformations between coordinates will be introduced by utilizing both geodesic coordinates and Riemannian distance.

Let $F^{L}\in \text{End}(T^{(1, 0)}X)$ be such that $x \in X$ and  for arbitrary $u, u' \in  T_{x}^{(1, 0)}X$, \begin{equation}\label{endo}
                                                                                                     R_{x}^{L}(u, u')=g_{x}^{TX}(F^{L}u, u').
                                                                                                   \end{equation}

Fix $x\in X$ and pick an orthonormal basis $\{p_{j}\}_{j=1}^{m}$ of $(T_{x}^{(1, 0)}X, g_{x}^{TX})$ such that \begin{equation}\label{endo1}
                                                                                                  F_{x}^{L} p_{j}= \kappa_{j}(x) p_{j}.
                                                                                                \end{equation}
Here $\kappa_{j}(x),\,j=1, 2, \ldots, m$ are the eigenvalues of $F^{L}$. The first assumption (a) in Subsection 1.1 gives that $\kappa_{j}(x) \geq \epsilon$ for $j=1, 2, \ldots, m$. Write now \begin{equation}\label{ee}
                                           q_{2j-1}=\frac{1}{\sqrt{2}}(p_{j}+ \overline{p_{j}}),\,\,\,q_{2j}=\frac{\sqrt{-1}}{\sqrt{2}}(p_{j}- \overline{p_{j}}),\,\,j=1, 2, \ldots, m.
                                         \end{equation}These vectors lead to an orthonormal basis of the real tangent space $(T_{x}X, g_{x}^{TX})$, so given that $w=\sum_{j=1}^{2m}{c_{j}q_{j}}\in T_{x}X$, one can have the representation \begin{equation}\label{rep1}
                                                          w=\sum_{j=1}^{m}{\frac{1}{\sqrt{2}}(c_{2j-1}+\sqrt{-1}c_{2j})p_{j}}+ \sum_{j=1}^{m}{\frac{1}{\sqrt{2}}(c_{2j-1}-\sqrt{-1}c_{2j})\overline{p_{j}}}.
                                                        \end{equation} We write $z=(z_1, z_2, \ldots, z_{m})$ with $z_{j}=c_{2j-1}+ \sqrt{-1}c_{2j},\,\,j=1, 2, \ldots, m$. The new coordinate $z$ is referred to as the complex coordinate corresponding to the element $w\in T_{x}X$. From (\ref{rep1}), we have \begin{equation}\label{efk} \frac{\partial}{\partial z_{j}}=\frac{1}{\sqrt{2}}p_{j},\,\,\,\frac{\partial}{\partial \bar{z_{j}}}=\frac{1}{\sqrt{2}}\bar{p_{j}},\end{equation} so (\ref{rep1}) takes the following form $$w=\sum_{j=1}^{m}{\big(z_{j} \frac{\partial}{\partial z_{j}}+ \bar{z_{j}} \frac{\partial}{\partial \bar{z_{j}}}\big)}.$$

Since $X$ is a complete Hermitian manifold, by the well-known theorem of Hopf-Rinow, $X$ is also geodesically complete, which means that, for every $x\in X$, the exponential map $\exp_{x}$ is defined on the whole tangent space $T_{x}X$, so take $u, u' \in T_{x}X$ and, determined as above, let $z$ and $z'$ be the related complex coordinates of $u$ and $u'$, respectively. Consider the following weighted distance function defined in \cite{DLM1}  \begin{equation}\label{wedi}
  \Psi_{x}^{TX}(u, u')^{2}= \sum_{j=1}^{m}{\kappa_{j}(x) |z_{j}- z'_{j}|^{2}}.
\end{equation}

Now we provide the coordinate transformations. For a given $\epsilon_{0}>0$, by using the exponential map, between the small open ball $B(x, 2\epsilon_{0})$ in $X$ with the small open ball $\mathfrak{B}(0, 2 \epsilon_{0})$ in $T_{x}(X)$, let $y=\exp_{x}(u')$ and $x=\exp_{x}(0)$. Let $\text{dist}(. , .)$ be the Riemannian distance of $(X, g^{TX})$. There is some constant $D_{1}>0$ such that for $u, u' \in \mathfrak{B}(0, 2 \epsilon_{0})$, we get \begin{equation}\label{dist1}
                                           D_{1} \text{dist}(\exp_{x}(u), \exp_{x}(u')) \geq \Psi_{x}^{TX}(u, u') \geq \frac{1}{D_{1}} \text{dist}(\exp_{x}(u), \exp_{x}(u')).
                                         \end{equation}

Since $\kappa_{j}(x) \geq \epsilon$, by writing $u=0 \in T_{x}X$ and $u'=u$ in (\ref{wedi}), we have \begin{equation}\Psi_{x}^{TX}(0, u) \geq \epsilon^{\frac{1}{2}} \text{dist}(\exp_{x}(0)=x, \exp_{x}(u)=y). \end{equation} Additionally, when we take into account a compact set $K\subset X$, the positive constants $D_{1}$ and $\epsilon_{0}$ can be selected in such a way that the above inequalities hold uniformly for all $x\in K$.

The next result obtained in \cite{DLM1} (Theorem 5.1) demonstrates the asymptotic properties of the normalized Bergman kernel, including both the near-diagonal and off-diagonal behavior. This result is crucial in establishing the proof of the main theorem. Before this result, let us give the definition of the normalized Bergman kernel. For any $n\in \mathbb{N}$ and $x, y \in X$, \begin{equation}\label{nbk}
                                                                                                   \Gamma_{n}(x, y)=\frac{|K_{n}(x, y)|_{h^{n}_{x} \otimes (h^{n}_{y})^{*}}}{K_{n}(x, x)^{1/2} K_{n}(y, y)^{1/2}}.
                                                                                                 \end{equation}
\begin{thm} \label{asym}
Suppose that a relatively compact set $V \subset X$ is given. We have the following uniform estimates for the normalized Bergman kernel for any $x, y \in V$: Fix $k\geq 1$ and $b_{0}> \sqrt{\frac{16k}{\epsilon}}$ such that for large enough $n$ (with $b_{0}\sqrt{\frac{\log{n}}{n}} \leq 2\epsilon_{0}$), one has \begin{equation*}\label{bka} \Gamma_{n}(x, y)= \begin{cases}
                                                   (1+ o(1))\exp{\big( -\frac{n}{4}\Psi_{x}^{TX}(0, u)^{2}\big)}, & \text{if uniformly for}\,\,  \text{dist}(x, \exp_{x}(u)=y)=|u| \leq b_{0}\sqrt{\frac{\log{n}}{n}}, \\
                                                   O(n^{-k}), & \text{if uniformly for}\,\, \text{dist}(x, \exp_{x}(u)=y)=|u| \geq  b_{0}\sqrt{\frac{\log{n}}{n}}.\end{cases} \end{equation*}

\end{thm}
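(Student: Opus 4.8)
The plan is to derive Theorem~\ref{asym} from the rescaled near-diagonal asymptotic expansion of the Bergman kernel $K_n$, whose engine is the spectral gap of the Kodaira--Laplace operator $\Box_n=\dbar^{\,*}\dbar$ on $L^2$-sections of $L^n$. First I would establish that gap: combining the Bochner--Kodaira--Nakano identity with hypotheses (a) $\sqrt{-1}R^L\ge\epsilon\Theta$, (b) $\sqrt{-1}R^{\det}\ge-C\Theta$ and (c) $|\partial\Theta|_{g^{TX}}\le B$ gives $\mathrm{Spec}(\Box_n)\subset\{0\}\cup[2n\epsilon_1-C_1,\infty)$ for constants $\epsilon_1,C_1>0$ independent of $n$, exactly as in Chapter~6 of \cite{MM1}. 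Then the Bergman projection is $K_n=\lim_{t\to\infty}e^{-\frac tn\Box_n}$ with $\|e^{-\frac tn\Box_n}-K_n\|=O(e^{-t\epsilon_1})$, and — using completeness of $g^{TX}$, so that $\cos(s\sqrt{\Box_n})$ has finite propagation speed — I would replace the kernel of $e^{-\frac tn\Box_n}$, up to an $O(n^{-\infty})$ error, by that of the same operator built from the data restricted to an arbitrarily small geodesic ball. This localizes the computation near a reference point and, since the geometry varies smoothly, renders every subsequent estimate uniform on $V$ (and on a compact neighborhood of $\overline V$).

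Next I would rescale to the model. Fixing $x_0\in V$ and using $\exp_{x_0}$ together with the complex coordinate $z$ of \eqref{rep1}--\eqref{efk}, set $z=Z/\sqrt n$; then $\frac1n\Box_n$ expands in half-integer powers of $n$ with leading term the model (Bargmann--Fock) Laplacian on $\mathbb{C}^m$ attached to the curvature eigenvalues $\kappa_1(x_0),\dots,\kappa_m(x_0)$ of $F^L_{x_0}$. Its Bergman kernel $\mathcal{P}(Z,Z')$ is explicit: $\mathcal{P}(Z,Z)$ equals a positive constant $c(x_0)$, and $|\mathcal{P}(Z,Z')|=c(x_0)\exp\!\big(-\tfrac14\sum_j\kappa_j(x_0)|Z_j-Z'_j|^2\big)$, which after undoing the rescaling and taking $x_0=x$, with $Z'=0$ corresponding to $y=\exp_x(u)$, becomes exactly $c(x)\exp\!\big(-\tfrac n4\Psi_x^{TX}(0,u)^2\big)$ by \eqref{wedi}. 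Pushing the expansion of $\frac1n\Box_n$ through the localization of the previous step would yield, uniformly for $x,y\in V$ in suitable local trivializations, $n^{-m}K_n(x,y)=\sum_{r=0}^{N}\mathcal{P}_r(\sqrt n\,z,\sqrt n\,z')\,n^{-r/2}+O(n^{-(N+1)/2})$, with $\mathcal{P}_0=\mathcal{P}$, each $\mathcal{P}_r$ a polynomial in $(\sqrt n\,z,\sqrt n\,z')$ times $\exp\!\big(-\tfrac n4\Psi_x^{TX}(0,u)^2\big)$, and the remainder carrying the same Gaussian decay up to polynomial factors.

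I would then split into the two regimes. On $|u|=\mathrm{dist}(x,y)\le b_0\sqrt{\log n/n}$ one has $\sqrt n\,|z|\le b_0\sqrt{\log n}$, so every polynomial factor above is $O((\log n)^M)$; dividing by $K_n(x,x)^{1/2}K_n(y,y)^{1/2}=n^m c(x)^{1/2}c(y)^{1/2}(1+O(n^{-1/2}))$ (the same expansion at $z=z'=0$) cancels the constants $c$ and collapses the error to $O(n^{-1/2}(\log n)^M)=o(1)$, which is the first line. On $|u|\ge b_0\sqrt{\log n/n}$ I would bound $|K_n(x,y)|$ by the leading Gaussian for $|u|$ up to a fixed small $\delta$ (still within the expansion's range) and by the crude $Cn^m e^{-c\sqrt n\,\delta}=O(n^{-\infty})$ for $|u|\ge\delta$; since $\kappa_j\ge\epsilon$ forces $\Psi_x^{TX}(0,u)^2\ge\epsilon|u|^2$, in the first case $\exp\!\big(-\tfrac n4\Psi_x^{TX}(0,u)^2\big)\le n^{-\epsilon b_0^2/4}\le n^{-4k}$ because $b_0^2>16k/\epsilon$, and dividing by $K_n(x,x)^{1/2}K_n(y,y)^{1/2}\asymp n^m$ leaves three spare powers of $n^{-k}$ to absorb polynomial prefactors and logarithmic losses, giving the second line.

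The hard part will not be the model computation or the case split but the first two steps on a non-compact manifold: proving the spectral gap and the finite-propagation-speed localization with constants \emph{uniform over} $V$ while controlling the behavior of $X$ at infinity. This is precisely where completeness of $g^{TX}$ (via Hopf--Rinow, already used to define $\exp_x$ on all of $T_xX$) and the global bounds (b), (c) are indispensable: without them the spectrum of $\Box_n$ could accumulate at $0$ and neither the gap nor the localization would survive. A secondary but genuine point is the bookkeeping at the critical window $|u|\asymp b_0\sqrt{\log n/n}$, where the Gaussian factor and the $n^m$-type normalization are comparable and the inequality $b_0>\sqrt{16k/\epsilon}$ has to be used with exactly the right margin.
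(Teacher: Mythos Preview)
The paper does not prove Theorem~\ref{asym}: it is quoted verbatim from \cite{DLM1} (Theorem~5.1 there) and used as a black box in the proof of Theorem~\ref{maint}. So there is no argument in the present paper to compare your proposal against.

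That said, your outline is essentially the route that \cite{DLM1} (building on \cite{MM1}, Chapters~4 and~6) takes: spectral gap from Bochner--Kodaira--Nakano under (a)--(c), localization via finite propagation speed on the complete manifold, rescaling to the Bargmann--Fock model with curvature eigenvalues $\kappa_j(x)$, and then reading off the two regimes from the explicit model Gaussian together with $\epsilon b_0^2/4>4k$. Your identification of where completeness and the global bounds (b), (c) enter is accurate, and the threshold computation $b_0>\sqrt{16k/\epsilon}\Rightarrow n^{-\epsilon b_0^2/4}\le n^{-4k}$ is the right one.

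One minor bookkeeping correction in your off-diagonal paragraph: dividing by $K_n(x,x)^{1/2}K_n(y,y)^{1/2}\asymp n^m$ does not contribute decay, because $|K_n(x,y)|$ itself carries the same $n^m$ prefactor from the expansion; the factors cancel. The spare $n^{-3k}$ that absorbs polynomial and logarithmic losses comes entirely from the Gaussian bound $n^{-4k}$ versus the target $n^{-k}$, not from the normalization. This does not affect the validity of the argument.
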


Now we are ready to state and prove our main theorem, which was also suggested in Subsection 3.3 of \cite{DLM2}. For the proof, we use the arguments from \cite{SZ10}.
\begin{thm}\label{maint}
Let $(L, h)$ be a positive Hermitian holomorphic line bundle over a non-compact complete  Hermitian manifold $(X, \Theta)$ of dimension $m$ satisfying the assumptions (a), (b) and (c) given in Subsection 1.1. Suppose that $H^{0}_{(2)}(X, L^{n})$ is endowed with the standard Gaussian measure.  Let $s_{n}\in H^{0}_{(2)}(X, L^{n})$ and $\phi$ be a real $(m-1, m-1)$-form on $X$ with $\mathcal{C}^{3}$-coefficients and $dd^{c} \phi \not\equiv 0$. Then the distributions of the random variables \begin{equation}\label{clte}
                                                                       \frac{\langle [\textnormal{Div}(s_{n})], \phi \rangle - \mathbb{E}[\langle [\textnormal{Div}(s_{n})], \phi \rangle]}{\sqrt{\textnormal{Var}[\langle [\textnormal{Div}(s_{n})], \phi \rangle]}}
                                                                     \end{equation} weakly converges towards the standard (real) Gaussian distribution $\mathcal{N}(0, 1)$ as $n\rightarrow \infty$.

\end{thm}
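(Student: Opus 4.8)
The plan is to follow the strategy of Shiffman--Zelditch \cite{SZ10}: the Poincar\'e--Lelong formula converts the smooth linear statistic into a nonlinear functional of a normalized Gaussian process whose covariance function has modulus equal to the normalized Bergman kernel $\Gamma_{n}$, and the central limit theorem then follows from Theorem \ref{sots} once its two hypotheses have been verified by means of the asymptotics in Theorem \ref{asym}.

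\emph{Reduction.} By the Poincar\'e--Lelong formula \eqref{poinle},
\[
\langle[\textnormal{Div}(s_{n})],\phi\rangle=\int_{X}\log|s_{n}|^{2}_{h^{n}}\,dd^{c}\phi+n\,\langle c_{1}(L,h),\phi\rangle,
\]
and the last term is deterministic, hence cancels in \eqref{clte}. Write $dd^{c}\phi=\psi\,dv_{X}$ with $\psi\in\mathcal{C}^{1}$ of compact support and $\psi\not\equiv0$ (this is exactly the hypothesis $dd^{c}\phi\not\equiv0$), and set $G=\operatorname{supp}\psi$, $\sigma=dv_{X}|_{G}$, a finite measure space. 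After locally trivializing $L^{n}$ and writing $S^{n}_{j}=f^{n}_{j}e_{n}$, put $\gamma^{n}_{j}(x)=f^{n}_{j}(x)\big(\sum_{k}|f^{n}_{k}(x)|^{2}\big)^{-1/2}$; then $\sum_{j}|\gamma^{n}_{j}(x)|^{2}\equiv1$, so $\alpha_{n}(x)=\sum_{j}\xi_{j}\gamma^{n}_{j}(x)$ is a normalized complex Gaussian process as in \eqref{gpro}, with $|\alpha_{n}(x)|^{2}=|s_{n}(x)|^{2}_{h^{n}}/K_{n}(x,x)$ and covariance function $\beta_{n}$ satisfying $|\beta_{n}(x,y)|=\Gamma_{n}(x,y)$. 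Taking $\lambda(\rho)=2\log\rho$, which lies in $L^{2}(\mathbb{R}^{+},e^{-\rho^{2}/2}\rho\,d\rho)$ and is increasing, and noting that $\log K_{n}(\cdot,\cdot)$ is deterministic while $\alpha_{n}(x)$ is a standard complex Gaussian for each fixed $x$ (so that $\mathbb{E}\big|\log|\alpha_{n}(x)|^{2}\big|$ is finite and independent of $x$ and $n$), a Fubini argument shows that $Z^{\psi}_{n}(\alpha_{n})=\int_{G}\lambda(|\alpha_{n}(x)|)\psi(x)\,dv_{X}(x)$ is almost surely finite, integrable, and of finite variance, and that
\[
\langle[\textnormal{Div}(s_{n})],\phi\rangle-\mathbb{E}\big[\langle[\textnormal{Div}(s_{n})],\phi\rangle\big]=Z^{\psi}_{n}(\alpha_{n})-\mathbb{E}\big[Z^{\psi}_{n}(\alpha_{n})\big].
\]
Because $\lambda$ depends on $\alpha_{n}$ only through $|\alpha_{n}|$, this identity and all moments appearing in Theorem \ref{sots} depend on $\beta_{n}$ only through $|\beta_{n}|=\Gamma_{n}$, so the local trivialization is immaterial.

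\emph{Verifying the hypotheses of Theorem \ref{sots}.} Since $\lambda$ is increasing, only $\nu=1$ is required in (i). Fix a relatively compact open $V\supset G$, then $k>m$ and $b_{0}>\sqrt{16k/\epsilon}$ so that Theorem \ref{asym} applies on $V$, and set $r_{n}=b_{0}\sqrt{(\log n)/n}$. For (ii), split $\int_{G}\Gamma_{n}(x,y)\,dv_{X}(y)$ according to whether $\text{dist}(x,y)\le r_{n}$: on the far part $\Gamma_{n}=O(n^{-k})$ and $\operatorname{vol}(V)<\infty$, while on the near part, passing to geodesic coordinates and using $\Gamma_{n}(x,y)\le C\exp\!\big(-\tfrac{n}{4}\Psi_{x}^{TX}(0,u)^{2}\big)$ with $\Psi_{x}^{TX}(0,u)^{2}=\sum_{j}\kappa_{j}(x)|z_{j}|^{2}\ge\epsilon|z|^{2}$, the rescaling $w=\sqrt{n}\,z$ (with Jacobian $n^{-m}$, absorbing the $1+O(|z|)$ factor from $dv_{X}$) bounds the contribution by $C'\,n^{-m}\int_{\mathbb{C}^{m}}\exp\!\big(-\tfrac14\sum_{j}\kappa_{j}(x)|w_{j}|^{2}\big)\,dv(w)\le C''n^{-m}$, uniformly for $x\in V$; thus $\sup_{x\in G}\int_{G}\Gamma_{n}(x,y)\,dv_{X}(y)=O(n^{-m})\to0$. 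For (i) with $\nu=1$, the same splitting shows that $\iint_{G\times G}\Gamma_{n}(x,y)^{2}\psi(x)\psi(y)\,dv_{X}(x)\,dv_{X}(y)$ equals its near-diagonal part up to $O(n^{-k})$; Taylor expanding $\psi(y)=\psi(x)+O(\text{dist}(x,y))$ and rescaling as above, this equals $c_{m}\,n^{-m}\int_{V}\psi(x)^{2}\big(\prod_{j}\kappa_{j}(x)\big)^{-1}dv_{X}(x)+o(n^{-m})$ with $c_{m}>0$. As $\psi\not\equiv0$ is continuous and $\epsilon\le\kappa_{j}\le C$ on $V$, this integral is strictly positive, so the numerator in (i) is $\asymp n^{-m}$ and the denominator $\asymp n^{-m}$, giving a strictly positive $\liminf$. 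Theorem \ref{sots} then yields the weak convergence of \eqref{clte} to $\mathcal{N}(0,1)$.

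\emph{Expected main obstacle.} The substance of the proof is the uniform verification of (i) and (ii): one must patch together the two regimes of Theorem \ref{asym} (the Gaussian bump near the diagonal, the polynomial decay $O(n^{-k})$ away from it) uniformly for $x,y\in V$, and control the error in replacing $dv_{X}$ by Lebesgue measure in geodesic coordinates under the parabolic rescaling $z\mapsto\sqrt{n}\,z$, keeping track of the eigenvalues $\kappa_{j}(x)$ of $F^{L}$. A second delicate point is that $dd^{c}\phi$ is a \emph{signed} measure, so the positivity needed in (i) must be extracted from the near-diagonal term $c_{m}\int_{V}\psi^{2}/\prod_{j}\kappa_{j}\,dv_{X}>0$ rather than from $dd^{c}\phi$ itself; this is precisely why $dd^{c}\phi\not\equiv0$ is both needed and sufficient, and why one assumes $\mathcal{C}^{3}$-regularity of $\phi$, i.e. a continuous density $\psi$.
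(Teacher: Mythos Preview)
Your proposal is correct and follows essentially the same route as the paper: reduce via Poincar\'e--Lelong to a nonlinear functional of a normalized Gaussian process whose covariance has modulus $\Gamma_{n}$, then verify hypotheses (i) and (ii) of Theorem~\ref{sots} by splitting into near- and far-diagonal regions and invoking Theorem~\ref{asym}. The only tactical differences are that for (ii) the paper uses the crude bound $\Gamma_{n}\le 1$ on the shrinking near-diagonal ball rather than your rescaled Gaussian estimate, and for (i) the paper replaces $\Psi_{x}^{TX}$ by the Riemannian distance via the two-sided comparison \eqref{dist1} (obtaining the lower bound $\frac{1}{2^{m}D_{1}^{4m}}\int_{X}\psi^{2}\,dv_{X}$) instead of tracking the eigenvalues $\kappa_{j}(x)$ as you do; both variants yield a strictly positive $\liminf$.
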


\begin{proof}
We use the machinery that we gave in the introduction. The normalized Gaussian processes $\alpha_{n}$ on $X$ will be constructed as follows. Take a measurable section $e_{L}$ of $L$ such that $e_{L}: X \rightarrow L$ with $|e_{L}(x)|_{h}=1$ for any $x\in X$. Take an orthonormal basis $\{s^{j}_{n}\}^{d_{n}}_{j=1}$ of $H^{0}_{(2)}(X, L^{n})$ for each $n\in \mathbb{N}$, where  $s^{j}_{n}= \varphi^{j}_{n} e^{n}_{L}$ and $d_{n}\in \mathbb{N}_{\geq 1} \cup \{\infty\}$. Write \begin{equation}\label{pro}
                                                                f^{j}_{n}(x)=\frac{\varphi^{j}_{n}(x)}{\sqrt{K_{n}(x, x)}},\,\,j=1, 2, \ldots, d_{n}.
                                                              \end{equation} Observe that $|\varphi^{j}_{n}|=|s^{j}_{n}|_{h^{n}}$ and $\sum_{j=1}^{d_{n}}{|f^{j}_{n}(x)|^{2}}=1$ by the relation (\ref{bergf}). Therefore, we can define a normalized complex Gaussian process on $X$ for each $n\in \mathbb{N}$ as follows: \begin{equation}\label{cgp}
                                                                         \alpha_{n}=\sum_{j=1}^{d_{n}}{b_{j}f^{j}_{n}},
                                                                       \end{equation}here the coefficients $b_{j}$ are i.i.d. complex Gaussian centered random variables with variance one. We see that a random holomorphic section $s_{n}=\sum_{j=1}^{d_{n}}{b_{j}s_{n}^{j}}$ can be written as \begin{equation}\label{sect}
                                                                        s_{n}=\sum_{j=1}^{d_{n}}{b_{j}s_{n}^{j}}=\sqrt{K_{n}(x, x)} \alpha_{n} e^{n}_{L},
                                                                     \end{equation} where the normalized complex Gaussian process appears.  Observe also from (\ref{sect}) that \begin{equation}\label{norm}
              |\alpha_{n}(x)|=\frac{|s_{n}(x)|_{h^{n}}}{\sqrt{K_{n}(x, x)}}.
            \end{equation}Now we calculate the covariance functions $\beta_{n}$ of the complex Gaussian processes $\alpha_{n}$. Note that, from the fact that the complex Gaussian random coefficients $b_{j}$ in (\ref{cgp}) are centered, i.i.d., and have variance one, it follows \begin{equation}\label{last}\text{Var}[b_{j}]=\mathbb{E}[|b_{j}|^{2}]=1,\,\,\,\mathbb{E}[b_{k}\bar{b_{l}}]=0\,\,\,\text{if}\,\,k\neq l.\end{equation} By the relation (\ref{cov}), (\ref{cgp}) and (\ref{last}), we have \begin{equation}\label{cov1}
                      \beta_{n}(x, y)=\mathbb{E}[\sum_{j=1}^{d_{n}}{b_{j}f^{j}_{n}(x)}\,  \overline{\sum_{j=1}^{d_{n}}{b_{j}f^{j}_{n}(y)}}]=\sum_{j=1}^{d_{n}}{f^{j}_{n}(x) \overline{f^j_{n}(y)}}.
                    \end{equation}

On the other hand, after some calculations, we get \begin{equation}\label{normb}
                                                    |K_{n}(x, y)|_{h^{n}_{x} \otimes (h^{*}_{y})^{n}}= \sqrt{\sum_{j=1}^{d_{n}}{|\varphi^{j}_{n}(x)|^{2} |\varphi^{j}_{n}(y)|^{2}} + \sum_{\substack{j=1\\
                                                    j < k}}^{d_{n}}{2 \Re{(\varphi^{j}_{n}(x) \,\overline{\varphi^{j}_{n}(y)}\,\overline{\varphi^{k}_{n}(x)} \, \varphi^{k}_{n}(y)}})}
                                                  \end{equation}

(\ref{cov1}), (\ref{normb}) and (\ref{pro}) together give \begin{equation}\label{coveb}\Gamma_{n}(x, y)= |\beta_{n}(x, y)|.\end{equation}

Take $\lambda(\rho)=\log{\rho}$. Let $dv_{X}=\frac{1}{m!}\Theta^{m}$ be the volume form on $X$. Let us fix a real $(m-1, m-1)$ test form $\phi$ with $\mathcal{C}^{3}$ coefficients. Then \begin{equation}\label{testf}
                                                                                       dd^c \phi=\frac{\sqrt{-1}}{2\pi}\partial \bar{\partial} \phi=\psi dv_{X},
                                                                                     \end{equation}The function $\psi$ is a $\mathcal{C}^1$ function and by using (\ref{norm}) and the Poincar\'{e}-Lelong formula (\ref{poinle}), we write (\ref{nonlin}) with our setting,
\begin{equation}\label{samev}Z^{\psi}_{n}(\alpha_{n})=\int_{X}{\big( \log{|s_{n}|_{h^{n}}-\log{\sqrt{K_{n}(x, x)}}} \big)\frac{\sqrt{-1}}{2\pi}\partial \overline{\partial}\phi(x)}= \langle [\text{Div}(s_{n})], \phi \rangle + \zeta_{n, L},\end{equation}where $\zeta_{n, L}= \langle -nc_{1}(L, h) -dd^{c}\log{\sqrt{K_{n}(x, x)}}, \phi \rangle$, namely, $\zeta_{n, L}$ is some constant relying only on the line bundle $L$. Thanks to a basic feature of variance, this last expression (\ref{samev}) tells us that $Z^{\psi}_{n}(\alpha_{n})$ and $\langle [\text{Div}(s_{n})], \phi \rangle$ have the same variances.

In the rest of the proof, we will confirm that the conditions (i) and (ii) of Theorem \ref{sots} do indeed hold in our case. In order to apply Theorem \ref{sots},  we take a compact set $K \subset X$ with $\text{supp}(\phi) \subset K$, which plays the role of $G$ in Theorem \ref{sots}. Since $\lambda(\rho)=\log{\rho}$ is increasing, we only examine the case that $\nu=1$. For both the off-diagonal and near-diagonal asymptotics, we divide the integrals into two parts: $\text{dist}(x, y) \leq b_{0}\sqrt{\frac{\log{n}}{n}}$ and $\text{dist}(x, y) \geq b_{0}\sqrt{\frac{\log{n}}{n}}$, which both integration regions are in $K$. We begin with the condition (ii) which is the easier one. Let $k$ and $b_{0}$ be as in Theorem \ref{asym}. Then we first readily have \begin{equation*}
  \lim_{n\rightarrow \infty} \sup_{x\in K}\int_{\text{dist}(x, y) \geq b_{0}\sqrt{\frac{\log{n}}{n}}}{\Gamma_{n}(x, y)dv_{X}(y)}=\lim_{n\rightarrow \infty} \sup_{x\in K}\int_{\text{dist}(x, y) \geq b_{0}\sqrt{\frac{\log{n}}{n}}}{O(n^{-k})dv_{X}(y)}=0.
\end{equation*}For the integral where $\text{dist}(x, y) \leq b_{0}\sqrt{\frac{\log{n}}{n}}$, due to the relation (\ref{coveb}), $\Gamma_{n}(x, y) \leq 1$. We get \begin{equation*}
                        \lim_{n\rightarrow \infty} \sup_{x\in K}\int_{\text{dist}(x, y) \leq b_{0}\sqrt{\frac{\log{n}}{n}}}{\Gamma_{n}(x, y)dv_{X}(y)} \leq \lim_{n\rightarrow \infty} \sup_{x\in K}\int_{\text{dist}(x, y) \leq b_{0}\sqrt{\frac{\log{n}}{n}}}{1dv_{X}(y)}=0.
                        \end{equation*}

We proceed with the verification of the condition (i). For the integral on the off-diagonal set where $\text{dist}(x, y) \geq b_{0}\sqrt{\frac{\log{n}}{n}}$, when we pass to the limit as $n\rightarrow \infty$, the numerator of the integrand decays to zero faster than the denominator because the corresponding asymptotics \,$O(n^{-k})$\, of the normalized Bergman kernel is of second degree in the numerator.

On the diagonal set where $\text{dist}(x, y) \leq b_{0}\sqrt{\frac{\log{n}}{n}}$, we will introduce an abuse of notation as done in \cite{SZ10} to facilitate the computations: Since $T_{x}X$ is a complex vector space, we can write $\frac{v}{\sqrt{n}}=0 + \frac{v}{\sqrt{n}}$, where $0\in T_{x}X$. Now by the exponential map above, we know $\exp_{x}(0)=x$, so we change the origin $0$ of $T_{x}X$ to $\exp_{x}(0)=x \in X$, and so $\exp_{x}(\frac{v}{\sqrt{n}})=x + \frac{v}{\sqrt{n}}$. Now by Theorem \ref{asym} and (\ref{testf})

\begin{equation}\label{st22}
\liminf_{n\rightarrow \infty}{\frac{\int_{K}dv_{X}\int_{|v|\leq b_{0}\sqrt{\log{n}}}{(1+ o(1))\exp{\big( -\frac{n}{2}\Psi_{x}^{TX}(0, \frac{v}{\sqrt{n}})^{2}\big)}\psi(x)\psi(x+\frac{v}{\sqrt{n}})dv}}{\sup_{x\in K}{\int_{|v|\leq b_{0}\sqrt{\log{n}}}{(1+ o(1))\exp{\big( -\frac{n}{4}\Psi_{x}^{TX}(0, \frac{v}{\sqrt{n}})^{2}\big)}dv}}}}.
\end{equation}

We will bound the integrals in the numerator and the denominator from below by two different integrals by invoking the inequality (\ref{dist1}) between the Riemannian distance on the Hermitian manifold $X$ and the weighted distance function $\Psi$ on the tangent space $T_{x}X$. We will use the left and right parts of the inequality (\ref{dist1}), respectively. For the numerator, from the inequality $D_{1} \text{dist}(\exp_{x}(u), \exp_{x}(u')) \geq \Psi_{x}^{TX}(u, u')$, we have \begin{equation}\label{lp11}
      e^{-\frac{n}{2}\Psi^{TX}_{x}(u, u')^{2}} \geq e^{-\frac{n}{2}D^{2}_{1} \text{dist}(\exp_{x}(u), \exp_{x}(u'))^{2}} .
     \end{equation}Similarly, the inequality $\Psi_{x}^{TX}(u, u') \geq \frac{1}{D_{1}} \text{dist}(\exp_{x}(u), \exp_{x}(u'))$ yields that  \begin{equation}\label{rp22}
       e^{-\frac{n}{4}\Psi^{TX}_{x}(u, u')^{2}} \leq e^{-\frac{n}{4}\frac{1}{D^{2}_{1}} \text{dist}(\exp_{x}(u), \exp_{x}(u'))^{2}}.
     \end{equation} Now, writing $u=0$ and $u'=\frac{v}{\sqrt{n}}$, the numerator of (\ref{st22}) can be bounded from below by (\ref{lp11}),

\begin{align}\label{wert}
\int_{K}dv_{X}\int_{|v|\leq b_{0}\sqrt{\log{n}}}{(1+ o(1))\exp{\big( -\frac{n}{2}\Psi_{x}^{TX}(0, \frac{v}{\sqrt{n}})^{2}\big)}\psi(x)\psi(x+\frac{v}{\sqrt{n}})dv} \\ \nonumber
\quad \geq \int_{K}dv_{X}\int_{|v|\leq b_{0}\sqrt{\log{n}}}{(1+ o(1))e^{-\frac{n}{2}D^{2}_{1} \text{dist}(\exp_{x}(0), \exp_{x}(\frac{v}{\sqrt{n}}))^{2}}\psi(x)\psi(x+\frac{v}{\sqrt{n}})dv} \nonumber &
     \end{align}

As for the denominator in (\ref{st22}), fix $x\in K$ so that the supremum is attained. Then, by using (\ref{rp22}),

\begin{equation}\label{ilg}
  \frac{1}{\int_{|v|\leq b_{0}\sqrt{\log{n}}}{(1+ o(1))\exp{\big( -\frac{n}{4}\Psi_{x}^{TX}(0, \frac{v}{\sqrt{n}})^{2}\big)}dv}} \geq \frac{1}{\int_{|v|\leq b_{0}\sqrt{\log{n}}}{(1+o(1))e^{-\frac{n}{4}\frac{1}{D^{2}_{1}} \text{dist}(\exp_{x}(0), \exp_{x}(\frac{v}{\sqrt{n}}))^{2}}dv}}.
\end{equation}

Recall that $\text{dist}(\exp_{x}(0), \exp_{x}(v))=|v|$. Therefore, when $n\rightarrow \infty$, since $\psi$ is $\mathcal{C}^{1}$, on the right hand sides of (\ref{wert}) and (\ref{ilg}), we obtain two Gaussian integrals (on $\mathbb{C}^{m}$) whose values are $(\frac{2\pi}{D^{2}_{1}})^{m}$ and $(4\pi D^{2}_{1})^{m}$ respectively. Hence, after the cancellations of these two values, the limit in (\ref{st22}) has a
lower bound given by \begin{equation*}\frac{1}{2^{m}D^{4m}_{1}}\int_{X}{\psi^{2}dv_{X}}>0,
                                                                                \end{equation*} which concludes the proof.
\end{proof}

\textbf{Acknowledgments.} Our thanks are extended to the anonymous reviewer whose recommendations have significantly enhanced the presentation of our manuscript.


{}

 \end{document}